\def\@settitle{%
  \vspace*{0pt}
  \begin{flushleft}%
%    \baselineskip14\p@\relax
    \LARGE\bfseries
%    \uppercasenonmath\@title
    \strut\@title\strut
  \end{flushleft}%
}
\def\@setauthors{%
  \begingroup
  \def\thanks{\protect\thanks@warning}%
  \trivlist
  %\centering
  \raggedright
  \large \@topsep27\p@\relax
  \advance\@topsep by -\baselineskip
  \item\relax
  \author@andify\authors
  \def\\{\protect\linebreak}%
%  \MakeUppercase{\authors}%
  \authors
  \ifx\@empty\contribs
  \else
    ,\penalty-3 \space \@setcontribs
    \@closetoccontribs
  \fi
  \normalfont
% \@setaddresses
  \endtrivlist
  \endgroup
}
\def\@setaddresses{\par
  \nobreak \begingroup
  \small\raggedright
  \def\author##1{\nobreak\addvspace\smallskipamount}%
  \def\\{\unskip, \ignorespaces}%
  \interlinepenalty\@M
  \def\address##1##2{\begingroup
    \par\addvspace\bigskipamount\noindent
    \@ifnotempty{##1}{(\ignorespaces##1\unskip) }%
    {\ignorespaces##2}\par\endgroup}%
  \def\curraddr##1##2{\begingroup
    \@ifnotempty{##2}{\nobreak\noindent\curraddrname
      \@ifnotempty{##1}{, \ignorespaces##1\unskip}\/:\space
      ##2\par}\endgroup}%
  \def\email##1##2{\begingroup
    \@ifnotempty{##2}{\nobreak\noindent E-mail address%
      \@ifnotempty{##1}{, \ignorespaces##1\unskip}\/:\space
      \ttfamily##2\par}\endgroup}%
  \def\urladdr##1##2{\begingroup
    \def~{\char`\~}%
    \@ifnotempty{##2}{\nobreak\noindent\urladdrname
      \@ifnotempty{##1}{, \ignorespaces##1\unskip}\/:\space
      \ttfamily##2\par}\endgroup}%
  \addresses
  \endgroup
  \global\let\addresses=\@empty
}
\def\@setabstracta{%
    \ifvoid\abstractbox
  \else
    \skip@17pt \advance\skip@-\lastskip
    \advance\skip@-\baselineskip \vskip\skip@
%    \chrule\vskip2pt
    \box\abstractbox
    \prevdepth\z@ % because \abstractbox is a vtop
%    \vskip2pt\hrule
    \vskip-15pt
  \fi
}
\renewenvironment{abstract}{%
  \ifx\maketitle\relax
    \ClassWarning{\@classname}{Abstract should precede
      \protect\maketitle\space in AMS document classes; reported}%
  \fi
  \global\setbox\abstractbox=\vtop \bgroup
    \normalfont\small
    \list{}{\labelwidth\z@
      \leftmargin0pc \rightmargin\leftmargin
      \listparindent\normalparindent \itemindent\z@
      \parsep\z@ \@plus\p@
      
    }%
    \item[\hskip\labelsep\bfseries\abstractname.]%
}{%
  \endlist\egroup
  \ifx\@setabstract\relax \@setabstracta \fi
}
\def\ps@headings{\ps@empty
  \def\@evenhead{%
    \setTrue{runhead}%
    \normalfont\scriptsize
    \rlap{\thepage}\hfill
    \def\thanks{\protect\thanks@warning}%
    \leftmark{}{}}%
  \def\@oddhead{%
    \setTrue{runhead}%
    \normalfont\scriptsize
    \def\thanks{\protect\thanks@warning}%
    \rightmark{}{}\hfill \llap{\thepage}}%
  \let\@mkboth\markboth
}\ps@headings
\def\section{\@startsection{section}{1}%
  \z@{-1.4\linespacing\@plus-.5\linespacing}{.8\linespacing}%
  {\normalfont\bfseries\Large}}
\def\subsection{\@startsection{subsection}{2}%
  \z@{-.8\linespacing\@plus-.3\linespacing}{.5\linespacing\@plus.2\linespacing}%
  {\normalfont\bfseries\large}}
\def\subsubsection{\@startsection{subsubsection}{3}%
  \z@{.7\linespacing\@plus.2\linespacing}{-1.5ex}%
  {\normalfont\bfseries}}
\def\@secnumfont{\bfseries}
\renewcommand\contentsnamefont{\bfseries}
\def\@starttoc#1#2{\begingroup
  \setTrue{#1}%
  \par\removelastskip\vskip\z@skip
  \@startsection{}\@M\z@{\linespacing\@plus\linespacing}%
    {.5\linespacing}{%\centering
      \contentsnamefont}{#2}%
  \ifx\contentsname#2%
  \else \addcontentsline{toc}{section}{#2}\fi
  \makeatletter
  \@input{\jobname.#1}%
  \if@filesw
    \@xp\newwrite\csname tf@#1\endcsname
    \immediate\@xp\openout\csname tf@#1\endcsname \jobname.#1\relax
  \fi
  \global\@nobreakfalse \endgroup
  \addvspace{32\p@\@plus14\p@}%
  \let\tableofcontents\relax
}
\def\contentsname{Contents}
\def\l@section{\@tocline{2}{.5ex}{0mm}{5pc}{}}
\def\l@subsection{\@tocline{2}{0pt}{2em}{5pc}{}}
\def\to{\mathchoice{\longrightarrow}{\rightarrow}{\rightarrow}{\rightarrow}}
\newcommand{\shortxra}[2][]{\ext@arrow 0359\rightarrowfill@{#1}{#2}}
\def\longrightarrowfill@{\arrowfill@\relbar\relbar\longrightarrow}
\newcommand{\longxra}[2][]{\ext@arrow 0359\longrightarrowfill@{#1}{#2}}
\def\addtagsub#1{\let\oldtf=\tagform@\def\tagform@##1{\oldtf{##1}\hbox{$_{#1}$}}}
\def\Nopagebreak{\@nobreaktrue\nopagebreak}
\newtheoremstyle{theorem-giventitle}
        {}{}              %%% space between body and thm
        {\itshape}                      %%% Thm body font
        {}                              %%% Indent amount (empty = no indent)
        {\bfseries}                     %%% Thm head font
        {.}                             %%% Punctuation after thm head
        {\thm@headsep}                             %%% Space after thm head
        {\thmnote{\bfseries#3}}%%% Thm head spec
\newtheoremstyle{theorem-givenlabel}
        {}{}              %%% space between body and thm
        {\itshape}                      %%% Thm body font
        {}                              %%% Indent amount (empty = no indent)
        {\bfseries}                     %%% Thm head font
        {.}                             %%% Punctuation after thm head
        {\thm@headsep}                             %%% Space after thm head
        {\thmname{#1}~\thmnumber{#3}\setcurrentlabel{#3}}%%% Thm head spec
\newtheoremstyle{definition-giventitle}
        {}{}              %%% space between body and thm
        {}                      %%% Thm body font
        {}                              %%% Indent amount (empty = no indent)
        {\bfseries}                     %%% Thm head font
        {.}                             %%% Punctuation after thm head
        {\thm@headsep}                             %%% Space after thm head
        {\thmnote{\bfseries#3}}%%% Thm head spec
\def\setcurrentlabel#1{\gdef\@currentlabel{#1}}
\newtheorem{theorem}{Theorem}[section]
\newtheorem{theoremalpha}{Theorem}
\newtheorem{corollaryalpha}[theoremalpha]{Corollary} 
\newtheorem{corollary}[theorem]{Corollary}
\theoremstyle{definition}
\theoremstyle{theorem-giventitle}
\newtheorem{theorem-named}{}
\theoremstyle{theorem-givenlabel}
\newtheorem{theorem-labeled}{Theorem}
\theoremstyle{definition-giventitle}
\newtheorem{definition-named}{}
\newtheorem{step-named}{}
\numberwithin{equation}{section}
\def\Z{\mathbb{Z}}
\def\R{\mathbb{R}}
\def\C{\mathbb{C}}
\def\N{\mathcal{N}}
\def\sign{\operatorname{sign}}
\def\lk{\operatorname{lk}}
\def\ldim{\dim^{(2)}}
\def\lsign{\sign^{(2)}}
\def\rhot{\rho^{(2)}}
\def\setminus{\smallsetminus}
\def\Vol{\operatorname{Vol}}
\begin{document}

\vspace*{0mm}

\title%
{Complexity of surgery manifolds and Cheeger-Gromov invariants}

\author{Jae Choon Cha}
\address{
  Department of Mathematics\\
  POSTECH\\
  Pohang 790--784\\
  Republic of Korea
  \quad -- and --\linebreak
  School of Mathematics\\
  Korea Institute for Advanced Study \\
  Seoul 130--722\\
  Republic of Korea
}
\email{jccha@postech.ac.kr}

\def\subjclassname{\textup{2010} Mathematics Subject Classification}
\expandafter\let\csname subjclassname@1991\endcsname=\subjclassname
\expandafter\let\csname subjclassname@2000\endcsname=\subjclassname
\subjclass{%
% 57M25, % Knots and links in $S^3$
% 57M27, % Invariants of knots and 3-manifolds
%  57N70%, % Cobordism and concordance (in low dimension)
%  57Q60; % Cobordism and concordance (in high dimension)
%  57M07, % Topological methods in group theory
}

% \keywords{}

\begin{abstract}
  We present new lower bounds on the complexity of Dehn surgery
  manifolds of knots, using our recent result on the Cheeger-Gromov
  rho invariants and triangulations.  As an application, we give
  explicit examples of closed hyperbolic 3-manifolds with fixed first
  homology for which the gap between the Gromov norm and the
  complexity is arbitrarily large.
\end{abstract}

\maketitle

\section{Main results}

In a recent paper~\cite{Cha:2014-1}, we have presented explicit linear
universal bounds of the Cheeger-Gromov $L^2$ $\rho$-invariants of
3-manifolds in terms of topological descriptions, especially
triangulations of 3-manifolds.  In this paper, we use the results
of~\cite{Cha:2014-1} to study the complexity of surgery manifolds.

\subsection*{Complexity of surgery manifolds of knots}

For a 3-manifold $M$, the \emph{complexity} $c(M)$ is defined by
\[
c(M) := \min \{\text{the number of 3-simplices in a pseudo-simplicial
  triangulation of $M$}\}.
\]
Here, a pseudo-simplicial triangulation designates a collection of
disjoint 3-simplices together with affine identifications of their
faces in pairs, whose quotient space is homeomorphic to the manifold.
(Sometimes it is just called a triangulation.)  It is more flexible
than a simplicial complex structure and studied extensively in the
3-manifold literature.  This definition of $c(M)$ is equivalent to
Matveev's complexity~\cite{Matveev:1990-1} for closed irreducible
3-manifolds $M\ne S^3$, $\R P^3$,~$L(3,1)$.

The determination of $c(M)$ is regarded as a hard
problem~\cite{Jaco-Rubinstein-Tillman:2013-1}.  In particular the main
difficulty is in finding an efficient lower bound.

As the first result of this paper, we give new lower bounds for the
complexity for Dehn surgery manifolds of knots, which are strong
enough to determine the complexity asymptotically.

To state it, we use the following notations.  For a knot $K$ in $S^3$
and $n\in \Z$, let $M(K,n)$ be the 3-manifold obtained by Dehn surgery
on $K$ with slope~$n$.  We denote by $c(K)$ the \emph{crossing number}
of $K$, that is, the minimal number of crossings in a planar diagram
of~$K$.
% We denote by $g_4(K)$ the (topological) slice genus of~$K$.  That
% is, $g_4(K)$ is the minimal genus of a properly embedded locally
% flat orientable surface in $B^4$ bounded by~$K$.
For two functions $f(n)$ and $g(n)$, we write $f(n)\in \Theta(g(n))$
if
\[
0 < \liminf_{n\to \infty} \frac{|f(n)|}{|g(n)|} \quad\text{and}\quad
\limsup_{n\to \infty} \frac{|f(n)|}{|g(n)|} < \infty,
\]
that is, the asymptotic growth of $f(n)$ and $g(n)$ are equivalent.
In terms of the big $O$ notation, $f(n)\in \Theta(g(n))$ if and only
if $f(n)\in O(g(n))$ and $g(n)\in O(f(n))$.

\begin{theoremalpha}
  \label{theorem:complexity-lower-bound-for-integral-dehn-surgery}
  For any knot $K$ in $S^3$ and any integer $n\ne 0$,
  \[
  \frac{|n|-3-6c(K)}{627419520} \le c(M(K,n)) \le 96|n|+128c(K).
  \]  
  Consequently, $c(M(K,n)) \in \Theta(n)$, and
  $\lim\limits_{|n|\to\infty} c(M(K,n)) = \infty$.
\end{theoremalpha}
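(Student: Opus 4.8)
We establish the two displayed inequalities separately; the final two assertions then follow at once, since dividing the inequalities by $|n|$ yields $\liminf_{n\to\infty} c(M(K,n))/|n| \ge 1/627419520 > 0$ and $\limsup_{n\to\infty} c(M(K,n))/|n| \le 96$, so that $c(M(K,n)) \in \Theta(n)$, while the left inequality alone already forces $c(M(K,n)) \to \infty$ as $|n|\to\infty$.

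\emph{The upper bound.} The plan is to build an explicit pseudo-simplicial triangulation of $M(K,n)$ out of two pieces read off from a minimal-crossing diagram $D$ of $K$. First one triangulates the knot exterior $E_K = S^3 \setminus N(K)$: assigning to each of the $c(K)$ crossings of $D$ a fixed block of ideal tetrahedra (the standard ``octahedral'' decomposition) gives an ideal triangulation of $E_K$ with $O(c(K))$ tetrahedra in which the peripheral torus carries a small explicit triangulation making the meridian $\mu$ and longitude $\lambda$ legible. Truncating this to a genuine triangulation of $E_K$ and gluing in a triangulated solid torus whose meridian disk bounds the surgery slope $n\mu+\lambda$ --- realized by a layered solid torus in the sense of Jaco--Rubinstein, which needs $O(|n|)$ tetrahedra together with a bounded number of extra layers to match the triangulation already present on $\partial E_K$ --- produces a triangulation of $M(K,n)$ with $O(|n|)+O(c(K))$ tetrahedra. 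Choosing the octahedral blocks and the layering economically and tracking the constants gives $c(M(K,n)) \le 96|n| + 128 c(K)$; for $K$ the unknot this is of course far weaker than the exact value $c(L(n,1)) = |n|-3$, as it must be.

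\emph{The lower bound.} The essential input is the universal bound of \cite{Cha:2014-1}: for every closed $3$-manifold $M$ and every homomorphism $\phi$ from $\pi_1(M)$ to a group, $|\rho^{(2)}(M,\phi)| \le 627419520 \cdot c(M)$. It therefore suffices to produce, for each $K$ and $n \ne 0$, a coefficient system $\phi$ with $|\rho^{(2)}(M(K,n),\phi)| \ge |n| - 3 - 6c(K)$. Since $H_1(M(K,n)) \cong \Z/n$ is generated by the meridian $\mu$ of $K$, I would take $\phi$ to factor through the abelianization via the character $\Z/n \to U(1)$ sending $\mu$ to $\zeta = e^{2\pi i r/n}$ with $r$ the integer closest to $n/2$ (so $\zeta = -1$ when $n$ is even), this being the choice that maximizes the trigonometric sum below. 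To evaluate $\rho^{(2)}(M(K,n),\phi)$, use the $4$-manifold $X_n(K)$ obtained by attaching an $n$-framed $2$-handle to $B^4$ along $K$: the inclusion of its boundary induces an isomorphism $H_1(M(K,n)) \cong H_1(X_n(K)) \cong \Z/n$, so $\phi$ extends over $X_n(K)$, and by the ($L^2$-)signature description of $\rho$-invariants, $\rho^{(2)}(M(K,n),\phi)$ equals the $\phi$-twisted signature of $X_n(K)$ minus $\sign(n) = \pm 1$. Presenting the twisted intersection form of $X_n(K)$ from a Seifert matrix $V$ of $K$ in the style of Casson--Gordon, this twisted signature splits as the Levine--Tristram signature $\sigma_K(\zeta)$ plus the contribution of the unknotted model $X_n(U)$; the latter, which up to the $\pm 1$ is $\rho^{(2)}(L(n,1),\phi)$, is computed by the Atiyah--Patodi--Singer lens-space formula, and for $\zeta$ near $-1$ it reduces, via $\sum_{k=1}^{n-1}\cot^2(\pi k/n) = \tfrac{(n-1)(n-2)}{3}$ and the analogous sum over odd $k$, to a quantity of absolute value at least $|n| - 3$. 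As $|\sigma_K(\zeta)| \le 2 g(K) \le 2 c(K) \le 6 c(K)$, the triangle inequality gives $|\rho^{(2)}(M(K,n),\phi)| \ge |n| - 3 - 6 c(K)$, which is the desired lower bound.

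The step I expect to be the main obstacle is the heart of the lower-bound argument: making the splitting of the twisted signature of $X_n(K)$ \emph{exact}, so that the deviation from the lens-space model is precisely $\sigma_K(\zeta)$ with no stray error term, and then carrying out the elementary but delicate case analysis over the parity and sign of $n$ so that the lens-space contribution genuinely attains the clean bound $|n| - 3$ (with the normalizations arranged to match the constant $627419520$). The upper bound, by contrast, is largely a matter of bookkeeping once the two triangulated pieces and their common boundary torus are in hand.
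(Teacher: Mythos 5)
Your overall strategy --- quote the upper bound from an explicit triangulation construction, and obtain the lower bound by exhibiting a finite-cyclic coefficient system with large Cheeger--Gromov invariant and invoking the universal bound $|\rhot(M,\phi)|\le C\cdot c(M)$ of \cite{Cha:2014-1} --- is exactly the paper's strategy, but the execution of the lower bound contains a genuine error in the identification of the invariant. For a homomorphism $\phi$ onto a finite cyclic group $\Z_d$, the group von Neumann algebra is $\C[\Z_d]$ and $\ldim_{\Z_d}=\frac1d\dim_\C$, so $\lsign_{\Z_d}(W)$ is the \emph{average} $\frac1d\sum_{k=0}^{d-1}\sigma_k(\tilde W)$ of the eigenspace signatures over \emph{all} $d$th roots of unity, not the twisted signature at one chosen character~$\zeta$. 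Hence $\rhot(M(K,n),\phi)$ is not ``the $\phi$-twisted signature of $X_n(K)$ minus $\pm1$''; the correct value (Corollary~\ref{corollary:rho-for-knot-surgery}) is $\frac{n}{3}+\frac{2}{3n}-1+\bar\sigma(K,n)$, whose lens-space part is of size about $n/3$, obtained by averaging Gilmer's formula over~$k$. The single-character quantity you compute is a Casson--Gordon/Atiyah--Singer invariant $\sigma_k(M,\phi)$, to which the universal bound of \cite{Cha:2014-1} does not apply (an individual $\sigma_k$ can exceed the average), and your numerics are also off: at $\zeta=-1$ the lens-space term is $\frac{2(d-k)k}{d}-1\approx n/2$, not $|n|-3$. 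The factor of $3$ separating $627419520$ from the constant $209139840$ of Theorem~\ref{theorem:lower-bound-of-complexity} is precisely the record of the $n/3$ coming from the average; your claimed estimate $|\rhot|\ge|n|-3-6c(K)$ would yield a strictly better constant than the theorem asserts, which is itself a warning sign.

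Two further points. First, $H_1(X_n(K))=0$ (the $2$-handle kills the meridian), so $\phi$ does \emph{not} extend over the trace of the surgery; this is exactly why one works with a $4$-manifold $W$ satisfying $\partial W=sM$ over $\Z_d$ (using finiteness of $\Omega_3(\Z_d)$), or with a branched cover of $X_n(K)$ along the core disk, as Casson--Gordon and Gilmer do. Second, for the upper bound the paper simply cites \cite[Theorem~A]{Cha:2015-1}; your octahedral-plus-layered-solid-torus outline is the right kind of construction, but you never produce the constants $96$ and $128$, so as written that half is an assertion rather than a proof. The repair of the lower bound is to compute the honest average: from Corollary~\ref{corollary:rho-for-knot-surgery}, $|\rhot(M(K,n),\phi)|\ge\frac{n-3-3|\bar\sigma(K,n)|}{3}$, and $|\bar\sigma(K,n)|\le 2g_4(K)\le 2c(K)$ gives $|\rhot(M(K,n),\phi)|\ge\frac{n-3-6c(K)}{3}$, which combined with Theorem~\ref{theorem:lower-bound-of-complexity} yields the stated denominator $3\cdot 209139840=627419520$.
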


% \begin{corollaryalpha}
%   \label{corollary:limit-of-complexity}
% \end{corollaryalpha}

The upper bound for $c(M(K,n))$ in
Theorem~\ref{theorem:complexity-lower-bound-for-integral-dehn-surgery}
is an immediate consequence of~\cite[Theorem~A]{Cha:2015-1}.  We give
a proof of the lower bound in
Section~\ref{section:proof-of-lower-bound}, using results on the
Cheeger-Gromov invariants and triangulations in~\cite{Cha:2014-1}.

We remark that while the statement of
Theorem~\ref{theorem:complexity-lower-bound-for-integral-dehn-surgery}
(and Theorem~\ref{theorem:complexity-lower-bound-from-signature}
stated below) is purely 3-dimensional, its proof, including the theory
developed in \cite{Cha:2014-1}, is essentially \emph{4-dimensional}.

We remark that in~\cite{Cha:2014-1} a lower bound
\begin{equation}
  \label{equation:lens-space-lower-bound}
  \frac{|n|-3}{627419520} \le c(L(n,1)).
\end{equation}
for the lens spaces $L(n,1)$ was obtained, and was shown to be
arbitrarily larger than lower bounds from previously known methods for
any large odd~$n$.  Our
Theorem~\ref{theorem:complexity-lower-bound-for-integral-dehn-surgery}
subsumes~\eqref{equation:lens-space-lower-bound} as a special case.

Note that the lower bound in
Theorem~\ref{theorem:complexity-lower-bound-for-integral-dehn-surgery}
is useful when the surgery slope $n$ is large.  Our method also
presents another lower bound, which is given in terms of classical
\emph{signature invariants} of the given knot~$K$.  To state this
result, we need the following notation.  Recall that for a knot $K$ in
$S^3$, a Seifert matrix $A$ is defined by choosing a Seifert surface
$F$ and a basis of~$H_1(F)$.  The \emph{Levine-Tristram signature
  function} of $K$ is defined by
\[
\sigma_K(\omega) :=
\sign\big((1-\omega)A+(1-\overline{\omega})A^T\big), \quad \omega\in
S^1\subset \C.
\]
We denote the average of the signature function over the $d$th roots
of unity by
\[
\bar\sigma(L,d) := \frac{1}{d} \sum_{k=1}^{d-1}
\sigma_{L}(e^{2\pi k\sqrt{-1}/d}).
\]

\begin{theoremalpha}
  \label{theorem:complexity-lower-bound-from-signature}
  For any knot $K$ in~$S^3$ and for any integer $n\ne 0$,
  \[
  c(M(K,n)) \ge \frac{3\big|\bar\sigma(K,|n|)\big|-|n|+1 \mathstrut}{627419520}.
  \]
\end{theoremalpha}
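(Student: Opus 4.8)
The plan is to feed a single, well-chosen coefficient system on $M(K,n)$ into the universal linear bound of~\cite{Cha:2014-1}, which controls $\big|\rho^{(2)}(M,\phi)\big|$ by a universal constant times $c(M)$ for every closed oriented $3$-manifold $M$ and every homomorphism $\phi$ out of $\pi_1(M)$. We take $\phi\colon\pi_1(M(K,n))\twoheadrightarrow H_1(M(K,n))=\Z/|n|$ to be the abelianization; the whole proof then reduces to the estimate
\[
\big|\rho^{(2)}(M(K,n),\phi)\big|\ \ge\ \big|\bar\sigma(K,|n|)\big|-\tfrac{|n|-1}{3},
\]
because feeding this into the universal bound gives $c(M(K,n))$ bounded below by $\big(\big|\bar\sigma(K,|n|)\big|-\tfrac{|n|-1}{3}\big)$ divided by the universal constant, and the factor $3$ together with the denominator $627419520$ in the statement are exactly what results from weighing that universal constant against the $\tfrac13$ error term.

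The core of the argument is the identity
\[
\rho^{(2)}(M(K,n),\phi)\ =\ \bar\sigma(K,|n|)\ -\ \rho^{(2)}\big(L(n,1),\phi_0\big),
\]
where $\phi_0$ is the abelianization for the lens space $L(n,1)=M(U,n)$, $U$ the unknot, possibly after reversing the orientation of $L(n,1)$. To prove it, note first that since $\Omega_i^{\SO}=0$ for $i=1,2,3$ one has $\Omega_3^{\SO}\big(B(\Z/|n|)\big)\cong H_3\big(B(\Z/|n|)\big)\cong\Z/|n|$, and $(M(K,n),\phi)$ and $(L(n,1),\phi_0)$ represent the same class there, their linking forms coinciding; hence there is a compact oriented $4$-manifold $W$ with $\partial W=M(K,n)\sqcup\big(-L(n,1)\big)$ over which $\phi$ and $\phi_0$ extend (one can also build such a $W$ explicitly from a surgery presentation relating $M(K,n)$ to $L(n,1)$ along nullhomologous framed curves). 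Since $\Z/|n|$ is finite,
\[
\rho^{(2)}(M(K,n),\phi)-\rho^{(2)}(L(n,1),\phi_0)=\lsign_{\Z/|n|}(W)-\sign(W)=\frac{1}{|n|}\sum_{j=1}^{|n|-1}\big(\sigma_{\chi_j}(W)-\sign(W)\big),
\]
and a Seifert-form computation in the spirit of Litherland and Casson--Gordon identifies each twisted signature defect $\sigma_{\chi_j}(W)-\sign(W)$ with the Levine--Tristram signature $\sigma_K\big(e^{2\pi j\sqrt{-1}/|n|}\big)$; summing over $j=1,\dots,|n|-1$ produces $\bar\sigma(K,|n|)$. (This identity, or an equivalent formulation of it, may well already be available in~\cite{Cha:2014-1}.)

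It remains to estimate the lens-space term. Here $\rho^{(2)}(L(n,1),\phi_0)=\frac{1}{|n|}\sum_{j=1}^{|n|-1}\rho_{\chi_j}(L(n,1))$ is the average of the Atiyah--Patodi--Singer $\rho$-invariants of $L(n,1)$, a classical quantity which a standard cotangent-sum evaluation shows to equal $\pm\tfrac{(|n|-1)(|n|-2)}{3|n|}$; in particular $\big|\rho^{(2)}(L(n,1),\phi_0)\big|\le\tfrac{|n|-1}{3}$. Combining this with the identity of the previous paragraph and the triangle inequality yields the displayed lower bound for $\big|\rho^{(2)}(M(K,n),\phi)\big|$, whence Theorem~\ref{theorem:complexity-lower-bound-from-signature} follows from the universal bound of~\cite{Cha:2014-1}. (The other branch of the triangle inequality, together with the bound $\big|\bar\sigma(K,|n|)\big|\le 2g(K)$ coming from the Seifert genus, likewise recovers the lower bound of Theorem~\ref{theorem:complexity-lower-bound-for-integral-dehn-surgery}.)

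The step I expect to be the main obstacle is the rho-invariant computation of the second paragraph --- not so much the existence of a bounding $4$-manifold carrying the required $\Z/|n|$-coefficient system, but the passage from its twisted (equivalently, $L^2$-) signature to the average of the Levine--Tristram signatures of $K$, carried out through the Seifert pairing of a surgery description of the cobordism. Two smaller points to pin down are that the universal bound of~\cite{Cha:2014-1} applies to coefficient systems factoring through a finite group --- it does, being proved for arbitrary $\phi$ --- and that the constants match so that the $\tfrac13$ contributed by the lens space combines with the universal constant to give exactly $627419520$.
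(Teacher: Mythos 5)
Your proposal is correct and follows essentially the same route as the paper: the paper also feeds the abelianization $\phi\colon\pi_1(M(K,n))\to\Z_{|n|}$ into the universal bound $|\rhot(M,\phi)|\le 209139840\cdot c(M)$ from \cite{Cha:2014-1}, and establishes the key identity $\rhot(M(K,n),\phi)=\bar\sigma(K,n)+\frac{n}{3}+\frac{2}{3n}-1$ via Casson--Gordon's twisted signature invariants and Gilmer's surgery formula --- precisely the Seifert-form computation you defer to known results. Your lens-space correction term $\frac{(|n|-1)(|n|-2)}{3|n|}$ is literally the paper's $\frac{n}{3}+\frac{2}{3n}-1$ in disguise, and the triangle-inequality step and the constant $627419520=3\cdot 209139840$ then match the paper's argument verbatim.
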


% Our method also gives results for several non-integral surgery slopes.
% For instance we have the following:

% \begin{theoremalpha}
%   \label{theorem:complexity-lower-bound-for-dehn-surgery-2}
%   Suppose $K$ is a knot in $S^3$ and $a\ne -1, 0, 1$ is an integer.
%   Then any integer $n$, we have
%   \[
%   \frac{|n|-3-6c(K)}{627419520} \le
%   c\Big(S^3\Big(K,\frac{a}{an-1}\Big)\Big) \le 96|n|+128c(K).
%   \]  
%   Consequently, $c(M(K,a/(an-1))) \in \Theta(n)$, and
%   $\lim\limits_{|n|\to\infty} c(M(K,a/(an-1))) = \infty$.
% \end{theoremalpha}

In what follows we discuss an application.

\subsection*{Gromov norm, stable complexity, and complexity}

Recall that the \emph{Gromov norm} (or \emph{simplicial volume}) of a
closed orientable manifold $M$ is defined by
\[
\|M\| := \inf \{ \|c\| : \text{$c$ is a real singular cycle
  representing the fundamental class of $M$} \}
\]
where $\|c\| := \sum |a_\sigma|$ denotes the $L^1$-norm of a chain
$c=\sum_\sigma a_\sigma\sigma$~\cite{Gromov:1982-1}.  Due to Gromov
and Thurston, if $M$ is a closed hyperbolic 3-manifold, then $\|M\| =
\Vol(M)/v_3$ where $v_3=1.01494\cdots$ is the volume of a regular
ideal tetrahedron in~$\mathbb{H}^3$~\cite{Thurston:1978-1}.

% Similarly to the complexity $c(M)$, the Gromov norm $\|M\|$ can also
% be viewed as a measure, in terms of simplices, of how complicated $M$
% is. 
Since we allow more flexible simplices for the Gromov norm when it is
compared with the complexity, we obtain immediately the following (see
also~\cite[Proposition~2.1]{Matveev-Petronio-Vesnin:2009-1}):
\begin{equation}
  \label{equation:gromov-norm-complexity-inequality}
  \|M\| \le c(M).
\end{equation}

% The following question arises naturally: \emph{For closed hyperbolic
%   3-manifolds, how close is the inequality
%   \eqref{equation:gromov-norm-complexity-inequality} to optimal?}

Using Thurston's hyperbolic Dehn surgery theorem, it can be observed
that the gap $c(M)-\|M\|$ of
\eqref{equation:gromov-norm-complexity-inequality} can be large: for
any hyperbolic knot, Dehn surgery along large slopes gives infinitely
many hyperbolic 3-manifolds with bounded Gromov norm.  Since there are
only finitely many 3-manifolds with fixed complexity, it follows that
these Dehn surgery manifolds have unbounded complexity.  Note that
this does not give us explicit examples with large gap nor any
estimate of the gap $c(M)-\|M\|$.

We remark that in spite of this the inequality
\eqref{equation:gromov-norm-complexity-inequality} has been used as a
practically useful lower bound for $c(M)$ for hyperbolic 3-manifolds
in the literature, implicitly expecting that
\eqref{equation:gromov-norm-complexity-inequality} is not far from
optimal in the hyperbolic case.  For instance
see~\cite{Matveev-Petronio-Vesnin:2009-1, Petronio-Vesnib:2009-1}.

% Nonetheless, it can be seen that
% \eqref{equation:gromov-norm-complexity-inequality} cannot be
% optimal, that is, $c(M)-\|M\|$ can be large, as follows: 

% We also remark that \eqref{equation:gromov-norm-complexity-inequality} does
% not give an efficient lower bound for $c(M)$ for the non-hyperbolic
% case; e.g., there are lens spaces with arbitrarily large
% complexity~\cite{Jaco-Rubinstein-Tillman:2009-1,
%   Jaco-Rubinstein-Tillman:2011-1,Cha:2014-1}.  On the other hand,

% In general, determining the complexity $c(M)$ is regarded as a hard
% problem.  The main difficulty is in finding a lower bound.  In fact,
% the simplicial volume $\|M\|$ has been used as a practically useful
% efficient lower bound of the complexity $c(M)$ for hyperbolic $M$,
% using the fact that $\Vol(M)$ is often easier to compute.  Note that
% this approach makes sense only when
% \eqref{equation:gromov-norm-complexity-inequality} is close to
% optimal.  

Using our method, we give explicit examples of closed hyperbolic
3-manifolds $M$ with fixed first homology, for which the gap
$c(M)-\|M\|$ is estimated to be large.  Let $J_n$ be the 2-bridge knot
in Figure~\ref{figure:two-bridge-example}.

\begin{figure}[H]
  \labellist\small
%  \pinlabel {$2p+1$ crossings} at 77 8
  \pinlabel {$2n$ crossings} at 148 14
  \endlabellist
  \includegraphics[scale=.85]{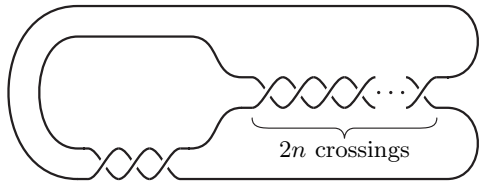}
  \caption{A 2-bridge knot $J_n$}
  \label{figure:two-bridge-example}
\end{figure}

\begin{theoremalpha}
  \label{theorem:gromov-norm-complexity-gap}
  Suppose $d>1$ is a fixed integer, and let $M_n = M(J_n,d)$.
  Then for any $n>2$, $M_n$ is hyperbolic and
  \[
  c(M_n) - \|M_n\| > \frac{1}{627419520}\Big(3\Big(1-\frac{1}{d^2}\Big)n - (d+7)\Big) - 6.
  \]
  Consequently $\lim\limits_{n\to\infty} c(M_n) - \|M_n\|= \infty$.
\end{theoremalpha}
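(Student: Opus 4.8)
The plan is to combine the two ingredients already assembled in the excerpt: the signature lower bound of Theorem~\ref{theorem:complexity-lower-bound-from-signature} applied to $K = J_n$ with slope $|n| = d$ wait — the slope is $d$, not $n$; I must be careful. So first I would apply Theorem~\ref{theorem:complexity-lower-bound-from-signature} with $K = J_n$ and surgery slope $d$, obtaining
\[
c(M(J_n,d)) \ge \frac{3\,|\bar\sigma(J_n,d)| - d + 1}{627419520}.
\]
The whole point is that although the slope $d$ is fixed, the knot $J_n$ varies, and its averaged Levine--Tristram signature $\bar\sigma(J_n,d)$ grows linearly in $n$. So the key computational step is to evaluate $\sigma_{J_n}(\omega)$ for the relevant roots of unity and show $|\bar\sigma(J_n,d)| \ge (1-\tfrac1{d^2})n - C$ for a small constant $C$. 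Since $J_n$ is an explicit 2-bridge knot built from $2n$ crossings in a twist region, a genus-one Seifert surface yields a $2\times 2$ Seifert matrix of the form $\begin{pmatrix} -1 & 1 \\ 0 & n\end{pmatrix}$ or similar, and $\sigma_{J_n}(\omega)$ is the signature of a rank-$2$ Hermitian matrix depending on $\omega = e^{2\pi i k/d}$; one checks that for each primitive-ish root the contribution is $-2$ once $n$ is large enough relative to the argument, giving $\bar\sigma(J_n,d) \approx \frac{1}{d}\sum_{k=1}^{d-1}(-2) = -2(1-\tfrac1d)$ — no, that's bounded, so the linear growth must come from a sharper count. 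Let me reconsider: the correct statement must be that $\sigma_{J_n}(e^{2\pi i k/d})$ itself is of order $n$ for most $k$, which happens if the twist region contributes a large positive Seifert block, so I would pin down the Seifert matrix precisely from Figure~\ref{figure:two-bridge-example} and read off that the signature function has a jump pattern producing $|\bar\sigma(J_n,d)| = (1-\tfrac1{d^2})n + O(1)$.

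Second, I would handle the $-6$ and the hyperbolicity/Gromov-norm side. For the upper estimate on $\|M_n\|$: by Gromov--Thurston $\|M_n\| = \Vol(M_n)/v_3$, and by Thurston's hyperbolic Dehn surgery theorem, for a fixed hyperbolic knot $J_n$ the volumes of its Dehn fillings are bounded above by $\Vol(S^3 \setminus J_n)$; but here $J_n$ itself varies, so instead I would use that the Gromov norm of any Dehn filling of a 3-manifold $N$ is at most $\|N\|$ (Gromov's monotonicity), combined with a bound $\|S^3\setminus J_n\| \le$ (a fixed multiple of the number of crossings) — or, more cleanly, use that $J_n$ is a 2-bridge knot whose complement is obtained by gluing a bounded number of ideal tetrahedra independent of $n$ (the $2n$-crossing twist region is a ``chain'' whose geometry stabilizes), hence $\|S^3\setminus J_n\|$ is bounded by an absolute constant, and therefore $\|M_n\| \le C'$ uniformly in $n$. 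Actually the cleanest route: $J_n$ is alternating with a diagram of $2n + O(1)$ crossings, but the twist regions make the Gromov norm small; the standard fact is that for twist knots / 2-bridge knots the volume is uniformly bounded. I would cite or quickly argue $\|M_n\| \le 6$ (absorbing it into the $-6$), which explains the $-6$ in the statement — it is exactly an a priori upper bound for $\|M_n\|$.

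Third, hyperbolicity of $M_n$ for $n > 2$: I would argue that $J_n$ is a hyperbolic knot (2-bridge knots are hyperbolic unless they are torus knots, and for $n>2$ the knot $J_n$ in the figure is not a $(2,q)$ torus knot — check its Alexander polynomial or continued fraction expansion), and then that $d$-surgery with $d > 1$ fixed is not one of the finitely many exceptional (non-hyperbolic) slopes; since $J_n$ has exactly one twist region with $\ge 6$ crossings, results on exceptional surgeries on knots with small bridge number (or on highly twisted knots) guarantee all integer surgeries with $|d|>1$ are hyperbolic for $n$ large, and the bound $n>2$ is the explicit threshold. Putting the pieces together: $c(M_n) \ge \frac{3|\bar\sigma(J_n,d)| - d + 1}{627419520} \ge \frac{3((1-1/d^2)n) - d - C'}{627419520}$ and $\|M_n\| \le 6$, so $c(M_n) - \|M_n\| > \frac{1}{627419520}(3(1-\tfrac1{d^2})n - (d+7)) - 6$ provided the $O(1)$ error in the signature computation is $\le$ the slack built into $(d+7)$ versus $d$, i.e. at most $7 + \text{(small)}$; this is where the explicit constant $d+7$ comes from. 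The limit statement is then immediate since the right side $\to \infty$.

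The main obstacle will be the precise signature computation for $J_n$: I need not just the asymptotics but an \emph{explicit} constant so that the error is absorbed into the displayed $d+7$ and the subtracted $6$. This requires writing down the Seifert matrix of $J_n$ exactly from Figure~\ref{figure:two-bridge-example}, computing $\sigma_{J_n}(e^{2\pi i k/d})$ as an honest function of $k$ and $n$ (tracking the finitely many $k$ for which the signature is ``anomalously'' small, which is exactly the source of the $1/d^2$ correction — roughly the $k$'s near $0$ and near $d$), and summing. The hyperbolicity and the uniform Gromov-norm bound are comparatively routine given the cited theory, though stating clean references (Thurston, Gromov monotonicity, exceptional-surgery bounds for twisted knots) is needed.
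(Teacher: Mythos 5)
Your overall architecture matches the paper's: apply Theorem~\ref{theorem:complexity-lower-bound-from-signature} to $K=J_n$ with slope $d$, show $\bar\sigma(J_n,d)$ grows like $(1-\tfrac1{d^2})n$, bound $\|M_n\|$ by $6$, and establish hyperbolicity. But the central quantitative step --- the signature computation --- is not actually carried out, and you explicitly defer it as ``the main obstacle.'' That obstacle \emph{is} the theorem. Your first attempt (a genus-one Seifert surface with a $2\times 2$ Seifert matrix) is wrong: the $2n$-crossing twist region in Figure~\ref{figure:two-bridge-example} produces a genus-$n$ Seifert surface and a $2n\times 2n$ Seifert matrix, which is why the signature can grow linearly in $n$ at all. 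The paper's resolution is a specific trick you did not find: the Seifert matrix of $J_n$ differs from that of the torus knot $T_{2,2n+1}$ in a single diagonal entry, so $|\bar\sigma(J_n,d)-\bar\sigma(T_{2,2n+1},d)|\le 2$, and Litherland's explicit formula for torus-knot signatures then yields $\bar\sigma(T_{2,2n+1},d)\ge(1-\tfrac1{d^2})n-\tfrac{d-1}{2d}$, from which the constant $d+7$ is extracted. Without this (or an equivalent exact computation) you cannot absorb the error into the stated constants.

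The geometric half also has a gap. The ``standard fact'' you invoke --- that 2-bridge knot complements have uniformly bounded volume --- is false in general (volume grows with the number of twist regions). What saves this family is that every $J_n$ is obtained by $1/n$-filling one cusp of a single fixed two-component link $L$, so every $M_n=M(J_n,d)$ is a filling of the one cusped manifold $S^3\setminus L$; the paper verifies with SnapPy that $L$ is hyperbolic with volume $5.3335$ and that the slopes $(d+4n,1/n)$ have length exceeding $2\pi$ for $n>1$, $d\ge 2$. The $2\pi$-theorem then gives hyperbolicity with the explicit threshold, and monotonicity of volume under filling gives $\|M_n\|<5.3335/v_3=5.2552<6$, which is exactly the source of the $-6$. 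Your alternative route through exceptional-surgery classifications for highly twisted knots would at best give hyperbolicity ``for $n$ large'' and would not by itself produce the uniform Gromov-norm bound; to make the argument work you need the auxiliary link $L$ (or something equivalent), which is absent from your proposal.
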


% This result is obtained by showing that the complexity and the Gromov
% norm has the \emph{opposite} limiting behavior, using
% Theorem~\ref{theorem:complexity-lower-bound-from-signature} and
% Thurston's hyperbolic Dehn surgery theorem.

The proof of Theorem~\ref{theorem:gromov-norm-complexity-gap} relies
on Theorem~\ref{theorem:complexity-lower-bound-from-signature}\@, the
$2\pi$-theorem of Thurston and Gromov, and computer-assisted
computation using SnapPy~\cite{SnapPy}.  For details, see
Section~\ref{section:computation-for-examples}.

Note that $H_1(M_n)=\Z_d$ is fixed for the 3-manifolds $M_n$ in
Theorem~\ref{theorem:gromov-norm-complexity-gap}\@.  We remark that
for odd $d$, it follows that previously known lower bounds do not
detect the divergence of~$c(M_n)$.  Specifically, the lower bounds for
$c(M_n)$ obtained from the $\Z_2$-Thurston norm (and the double cover)
in~\cite{Jaco-Rubinstein-Tillman:2009-1,
  Jaco-Rubinstein-Tillman:2011-1, Jaco-Rubinstein-Tillman:2013-1}
vanish.  Also, the lower bound for $c(M_n)$ obtained from the first
homology in~\cite{Matveev-Pervova:2001-1} is a constant ($=\log_5 d$);
especially it vanishes if $d<5$.

In their study of characteristic numbers of
3-manifolds~\cite{Milnor-Thurston:1977-1}, Milnor and Thurston
introduced the notion of \emph{stable complexity}, which is defined by
\[
\sigma(M)=\inf\Big\{\frac{c(\tilde M)}{k} \,\Big|\, \tilde M
\rightarrow M \text{ is a finite cover of degree }k \Big\}.
\]
It is known that $\|M\| \le \sigma(M) \le c(M)$ (e.g.,
see~\cite{Francaviglia-Frigerio-Martelli:2012-1}).  By Francaviglia,
Frigerio, and Martelli
\cite[Proposition~1.6]{Francaviglia-Frigerio-Martelli:2012-1}, there
is a constant $D_3$ such that $\sigma(M) \le D_3\cdot \|M\|$ for any
closed hyperbolic 3-manifold~$M$.  From this and
Theorem~\ref{theorem:gromov-norm-complexity-gap}, we immediately
obtain that the gap between the complexity and the stable complexity
can be arbitrarily large for closed hyperbolic 3-manifolds.  More
specifically, we have:

\begin{corollaryalpha}
  If the 3-manifolds $M_n$ are as in
  Theorem~\ref{theorem:gromov-norm-complexity-gap}\@,
  $\lim\limits_{n\to\infty} c(M_n) - \sigma(M_n)=\infty$.
\end{corollaryalpha}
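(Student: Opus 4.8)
The plan is to combine Theorem~\ref{theorem:gromov-norm-complexity-gap} with the known inequalities $\|M\|\le\sigma(M)\le c(M)$ and with the linear bound $\sigma(M)\le D_3\|M\|$ of Francaviglia--Frigerio--Martelli. The only input beyond what is already displayed in the text is that the Gromov norms $\|M_n\|$ stay bounded as $n\to\infty$.

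First I would record that $\{\|M_n\|\}_n$ is bounded. By the Gromov--Thurston formula $\|M_n\|=\Vol(M_n)/v_3$ (valid since $M_n$ is hyperbolic for $n>2$ by Theorem~\ref{theorem:gromov-norm-complexity-gap}), so it suffices to bound $\Vol(M_n)$. This is among the data computed by SnapPy in the proof of Theorem~\ref{theorem:gromov-norm-complexity-gap}; concretely, $J_n$ is a twist knot --- visible from Figure~\ref{figure:two-bridge-example} --- hence $M_n=M(J_n,d)$ is a Dehn filling of the Whitehead link complement (fill one cusp to produce $J_n$, the other along slope $d$), and so $\Vol(M_n)$ is less than the volume of the Whitehead link complement, a constant independent of $n$. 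Thus $\|M_n\|<B$ for some constant $B$ and all $n>2$.

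Now I would chain the inequalities. Applying \cite[Proposition~1.6]{Francaviglia-Frigerio-Martelli:2012-1} to the hyperbolic manifolds $M_n$ and using the previous step, $\sigma(M_n)\le D_3\|M_n\|<D_3B$, independent of $n$. On the other hand Theorem~\ref{theorem:gromov-norm-complexity-gap} together with $\|M_n\|\ge0$ gives
\[
c(M_n)>\frac{1}{627419520}\Big(3\Big(1-\frac{1}{d^2}\Big)n-(d+7)\Big)-6,
\]
whose right-hand side tends to $+\infty$ as $n\to\infty$ because $d>1$. Therefore $c(M_n)-\sigma(M_n)>c(M_n)-D_3B\to\infty$, which is the assertion.

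The only step that is not a formal manipulation of inequalities already on the page is the uniform bound on $\|M_n\|$ in the second paragraph; this is where the particular geometry of the family $J_n$ enters. I do not expect it to pose any real obstacle, since the relevant volumes are either bounded a priori via the Whitehead link or computed directly in establishing Theorem~\ref{theorem:gromov-norm-complexity-gap}.
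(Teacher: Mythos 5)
Your argument is essentially the paper's: the proof of Theorem~\ref{theorem:gromov-norm-complexity-gap} (Section~\ref{subsection:hyperbolicity-gromov-norm}) already establishes the uniform bound $\|M_n\|<5.2552$ by realizing $M_n$ as Dehn fillings of a fixed two-cusped hyperbolic link complement, and combining this with $\sigma(M_n)\le D_3\|M_n\|$ and the divergence of $c(M_n)$ gives the corollary exactly as you describe. The one inaccuracy is the aside identifying $J_n$ as a twist knot filled from the Whitehead link --- $J_n$ has Seifert genus $n$, and the relevant link is the one in Figure~\ref{figure:two-bridge-seed-link}, of volume $5.3335$ --- but your fallback justification (the volume bound obtained in proving Theorem~\ref{theorem:gromov-norm-complexity-gap}) is precisely what the paper uses, so the proof stands.
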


\subsection*{Acknowledgements}

This work was partially supported by NRF grants 2013067043 and
2013053914.

\section{Proofs of complexity estimates}
\label{section:proof-of-lower-bound}

We begin with definitions and some preparations in
Sections~\ref{subsection:cheeger-gromov-invariant-complexity}
and~\ref{subsection:finite-cyclic-cheeger-gromov-invariant}.  In
Section~\ref{subsection:lower-bound-surgery-manifold-complexity}, we
will prove the lower bounds in
Theorems~\ref{theorem:complexity-lower-bound-for-integral-dehn-surgery}
and~\ref{theorem:complexity-lower-bound-from-signature}.

\subsection{Cheeger-Gromov rho invariants and complexity}
\label{subsection:cheeger-gromov-invariant-complexity}

For a closed $(4k-1)$-dimensional smooth manifold $M$ and a
homomorphism $\phi\colon \pi_1(M) \to G$, Cheeger and Gromov defined
the $L^2$ $\rho$-invariant $\rhot(M,\phi)\in \R$ as the difference of
the ordinary and von Neumann $L^2$ eta invariants of the odd signature
operator~\cite{Cheeger-Gromov:1985-1}.  It is known that
$\rhot(M,\phi)$ can be defined as the $L^2$-signature defect of an
appropriate bounding $4k$-manifold.  (This also enables us to define
$\rhot(M,\phi)$ for topological manifolds.)

For the use in
Section~\ref{subsection:finite-cyclic-cheeger-gromov-invariant}, we
describe the $L^2$-signature defect definition of $\rhot(M,\phi)$ for
3-manifolds.  Given $\phi\colon \pi_1(M) \to G$, it is known that
there is an embedding of $G$ into another group $\Gamma$ and a
4-manifold $W$ bounded by $M$ for which there is a commutative diagram
as follows:
\begin{equation}
  \label{equation:bounding-manifold-diagram}
  \vcenter{\xymatrix@M=2mm{
    \pi_1(M) \ar[r]^{\phi} \ar[d]_{i_*} & G \ar@{^{(}->}[d]
    \\
    \pi_1(W) \ar@{..>}[r] & \Gamma
  }}
\end{equation}
In fact, by Kan and Thurston~\cite{Kan-Thurston:1976-1}, any $G$
embeds into an acyclic group $\Gamma$, and by bordism theory, such $W$
exists whenever $\Gamma$ is acyclic.

We consider the homology $H_*(W;\N\Gamma)$ of $W$ with coefficients in
the \emph{group von Neumann algebra} $\N\Gamma$ of $\Gamma$, which is
an algebra containing the group ring~$\C\Gamma$.  By Poincar\'e
duality, the intersection form
\[
\lambda\colon H_2(W;\N\Gamma)\times H_2(W;\N\Gamma) \to \N\Gamma
\]
is defined.  We invoke a couple of useful properties of~$\N\Gamma$;
first there is a spectral decomposition $H_2(W;\N\Gamma)=V_+\oplus
V_-\oplus V_0$, where $\lambda$ is positive definite, negative
definite, and zero on $V_+$, $V_-$, and $V_0$, respectively.
% (Here we say $\lambda$ on $V_+$ is \emph{positive definite} if
% $\lambda(x,x)=a^*a$ for some nonzero $a\in \N\Gamma$ whenever $x\in
% V_+$ is nonzero; $\lambda$ is \emph{negative definite} if $-\lambda$
% is positive definite.)
Second, the \emph{$L^2$-dimension}
$\ldim_\Gamma V \in \R\cup\{\infty\}$ is defined for any module $V$
over~$\N\Gamma$.  In our case, it turns out that $\ldim_\Gamma
V_{\pm}$ is finite.  We define the \emph{$L^2$-signature} of $W$ over
$\Gamma$ by
\begin{equation}
  \label{equation:l2-sign-definition}
  \lsign_\Gamma(W) := \ldim_\Gamma V_+ - \ldim_\Gamma V_-.
\end{equation}
Now the Cheeger-Gromov invariant of $(M,\phi)$ is defined by
\[
\rhot(M,\phi) := \lsign_\Gamma(W)-\sign(W)
\]
where $\sign(W)$ designates the ordinary signature of~$W$.  It is
known that $\rhot(M,\phi)$ is independent of the choice of the
diagram~\eqref{equation:bounding-manifold-diagram}.  For more details,
the readers are referred to, for instance,
\cite[Section~2.1]{Cha:2014-1}, \cite{Chang-Weinberger:2003-1}.

\subsubsection*{Lower bounds on complexity}

In~\cite{Cheeger-Gromov:1985-1}, Cheeger and Gromov presented a deep
analytic argument which shows that for each $M$, there is a universal
bound for the values of $\rhot(M,\phi)$.  That is, there is a constant
$C_M$ such that $|\rhot(M,\phi)|\le C_M$ (for any $G$ and) for any
$\phi\colon \pi_1(M) \to G$.  In~\cite{Cha:2014-1}, we developed a
topological approach to the universal bound.  One of the main result
was the following explicit linear bound in terms of the complexity:

\begin{theorem}[{\cite[Corollary~1.11]{Cha:2014-1}}]
  \label{theorem:lower-bound-of-complexity}
  If $M$ is a closed $3$-manifold, then for any homomorphism $\phi$ of
  $\pi_1(M)$,
  \[
  |\rhot(M,\phi)| \le 209139840\cdot c(M).
  \]
\end{theorem}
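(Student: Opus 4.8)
The plan is to use the $L^2$-signature defect description of $\rhot$ recalled above, made \emph{effective}. Given $\phi\colon\pi_1(M)\to G$, choose the acyclic group $\Gamma$ containing $\Im\phi$ to be a \emph{functorial} acyclic container --- e.g.\ the Baumslag--Dyer--Heller group of $G$ --- and pick a compact oriented $4$-manifold $W$ with $\partial W=M$ fitting into a diagram as in~\eqref{equation:bounding-manifold-diagram}. By the independence statement recalled above, $\rhot(M,\phi)=\lsign_\Gamma(W)-\sign(W)$ for any such choice, so it suffices to exhibit \emph{one} choice of $(\Gamma,W)$ that is small. If $N_2$ denotes the number of $2$-cells of a handle/CW decomposition of $W$, then the cellular chain complex with $\N\Gamma$-coefficients has $C_2\cong\N\Gamma^{N_2}$, so $H_2(W;\N\Gamma)$ is an $\N\Gamma$-subquotient of $\N\Gamma^{N_2}$; by monotonicity of $L^2$-dimension, $\ldim_\Gamma H_2(W;\N\Gamma)\le N_2$, hence $|\lsign_\Gamma(W)|\le\ldim_\Gamma V_++\ldim_\Gamma V_-\le N_2$, and likewise $|\sign(W)|\le b_2(W)\le N_2$. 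Therefore $|\rhot(M,\phi)|\le 2N_2$, and the whole problem reduces to building a bounding $W$ with $N_2$ linear in $c(M)$.

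To build such a $W$, start from a pseudo-simplicial triangulation of $M$ realizing $c(M)$, with $t=c(M)$ tetrahedra; this produces a handle/CW structure on $M$ with $O(t)$ cells in each dimension, and in particular a presentation of $\pi_1(M)$ of size $O(t)$. The acyclicity of $\Gamma$ forces the bordism group $\Omega_3^{\SO}(B\Gamma)$ to vanish, so $M$, equipped with the classifying map of $\pi_1(M)\to\Gamma$, bounds some $W$; the crucial extra input is that $\Gamma$ is \emph{binate} (flabby), so the vanishing of $H_*(\Gamma)$ is witnessed by an \emph{explicit} chain null-homotopy, and this is what makes the null-bordism \emph{efficient}. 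Concretely, realize this null-homotopy geometrically: attach to $M\times[0,1]$ handles of indices $1,2,3$ (and finally $4$), the number of handles in each index being controlled --- by applying the explicit binate contraction to the $O(t)$ cells of $M$ --- by a fixed multiple of $t$; one then checks that $\pi_1$ of the resulting $W$ maps to $\Gamma$ compatibly with $\phi$ and that $\partial W=M$.

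This gives a bounding $W$ with $N_2\le C\cdot c(M)$ for an explicit constant $C$, whence $|\rhot(M,\phi)|\le 2C\cdot c(M)$ by the first step. Tracking the bookkeeping through each stage --- converting the triangulation to a handle structure, accounting for the generators and relations introduced to push $\pi_1$ into $\Gamma$, and counting the filling handles supplied by the binate contraction, with each stage contributing a multiplicative factor --- is what pins down the explicit constant $209139840$ in place of $2C$.

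The main obstacle is precisely this \emph{effectivity}: ordinary bordism theory yields a null-bordism $W$ with no control whatsoever on its size, so the heart of the argument is the quantitative construction --- proving that a number of handles \emph{linear} in $t$ already suffices both to realize the homomorphism into $\Gamma$ and to kill the ensuing $L^2$-homology. This is exactly where the functorial, binate structure of the acyclic container is indispensable, and where essentially all of the difficulty (and the final constant) lies. A more routine but still delicate point is verifying that the passage from a pseudo-simplicial triangulation to a handle decomposition, together with the accounting of the $2$-cells contributed by $M$ itself, preserves linearity.
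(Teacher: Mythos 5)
The paper does not actually prove this statement: it is imported as Corollary~1.11 of \cite{Cha:2014-1}, so there is no in-paper argument to compare against. Your outline is, however, faithful to the strategy of that reference: bound $|\rhot(M,\phi)|$ by twice the number of $2$-handles of a bounding $4$-manifold $W$ over an acyclic group (your reduction via monotonicity of the $L^2$-dimension, $\ldim_\Gamma H_2(W;\N\Gamma)\le N_2$ and $|\sign W|\le b_2(W)\le N_2$, is correct), and then construct such a $W$ with $2$-handle count linear in the number of tetrahedra by geometrically realizing the explicit chain contraction furnished by a functorial acyclic (mitotic/binate) container of the target group. This is indeed the right, and essentially the only known, route to an effective universal bound.

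That said, as a proof your proposal has a genuine gap, and you have located it yourself: the entire quantitative content --- that the binate contraction can be realized by handle attachments whose number in each index is bounded by an explicit multiple of $t=c(M)$, compatibly with the map to $\Gamma$ and with $\partial W=M$ --- is asserted rather than carried out. This is where all of the work in \cite{Cha:2014-1} lies: producing controlled embeddings into mitotic groups, converting the pseudo-simplicial triangulation into an efficient presentation, building the $4$-dimensional bordism over $B\Gamma$ handle by handle, and doing the bookkeeping that yields the specific constant $209139840$. Without executing that construction, neither the linearity in $c(M)$ nor the value of the constant can be verified from what you have written; what you have is a correct and well-motivated roadmap, not a proof.
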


In the proof of
Theorems~\ref{theorem:complexity-lower-bound-for-integral-dehn-surgery}
and~\ref{theorem:complexity-lower-bound-from-signature} we will use
Theorem~\ref{theorem:lower-bound-of-complexity} as a lower bound
for~$c(M)$.  To estimate the lower bound, we will compute certain
$\rho$-invariants over finite cyclic groups, using a method for
general 3-manifolds discussed in the next subsection.

\subsection{Cheeger-Gromov invariants over finite cyclic groups}
\label{subsection:finite-cyclic-cheeger-gromov-invariant}

In this subsection we give an explicit formula for the Cheeger-Gromov
invariant of a 3-manifold over a finite cyclic group
(Theorem~\ref{theorem:rho-over-finite-cyclic-group}).  In fact, in
this case, $\rhot(M,\phi)$ can be obtained from the Atiyah-Singer
invariants~\cite{Atiyah-Singer:1968-3}, which are essentially
equivalent to the Casson-Gordon
invariants~\cite{Casson-Gordon:1978-1}.  Our proof depends on results
of Casson-Gordon~\cite{Casson-Gordon:1978-1} and
Gilmer~\cite{Gilmer:1981-1}, which in turn can be shown using the
Atiyah-Singer $G$-signature theorem.

% Recall that for an oriented link $L$ in $S^3$, a Seifert matrix $A$ is
% defined by choosing a Seifert surface $F$ (which may or may not
% connected) and a basis of~$H_1(F)$.  The \emph{Levine-Tristram
%   signature function} of $L$ is defined by
% \[
% \sigma_L(\omega) :=
% \sign\big((1-\omega)A+(1-\overline{\omega})A^T\big), \quad \omega\in
% S^1\subset \C.
% \]
% We define the average of the signature function over the $d$th roots
% of unity as follows:
% \[
% \bar\sigma(L,d) := \frac{1}{d} \sum_{k=1}^{d-1}
% \sigma_{L}(e^{2\pi k\sqrt{-1}/d}).
% \]

Suppose $K$ is an oriented knot in $S^3$ and $n$ and $r$ are integers.
We call a union of finitely many disjoint parallels of $K$ an
\emph{$n$-twisted $r$-cable} if each parallel is taken along the
$n$-framing of $K$ and oriented in such a way that the sum of the
parallels is homologous to $rK$ in a tubular neighborhood of~$K$.

\begin{theorem}
  \label{theorem:rho-over-finite-cyclic-group}
  Suppose $M$ is a closed $3$-manifold, and $L = K_1\sqcup\cdots\sqcup
  K_r$ is an $r$-component oriented link in $S^3$ such that surgery on
  $L$ with integral coefficients $n_1,\ldots, n_r$ gives~$M$.  Let
  $\Lambda=(n_{ij})$ be the linking matrix defined by $n_{ii}=n_i$ and
  $n_{ij}=\lk(K_i,K_j)$ for $i\ne j$.  Suppose $\phi\colon \pi_1(M)\to
  \Z_d$ is a homomorphism.  Let $\mu_i$ be the positive meridian of
  $K_i$, and let $r_i$ be an integer satisfying $r_i = \phi(\mu_i)$
  in~$\Z_d$.  Let $L'$ be the link obtained from $L$ by replacing each
  component $K_i$ with a nonempty $n_i$-twisted $r_i$-cable of~$K_i$.
  Then we have
  \[
  \rhot(M,\phi) = \bar\sigma(L',d) - \frac{d-1}{d} \sign \Lambda +
  \frac{d^2-1}{3d^2} \sum_{i,j} r_i r_j n_{ij}.
  \]
\end{theorem}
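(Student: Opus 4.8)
The plan is to reduce the computation of $\rhot(M,\phi)$ to the $L^2$-signature defect of an explicit bounding $4$-manifold, and then identify that defect with classical Casson--Gordon-type signature data via the Atiyah--Singer $G$-signature theorem. First I would build the standard $4$-manifold $W_0$ obtained by attaching $2$-handles to $D^4$ along $L$ with framings $n_1,\dots,n_r$; then $\partial W_0 = M$, $H_2(W_0) \cong \Z^r$ with intersection form $\Lambda$, and $\sign(W_0) = \sign\Lambda$. The homomorphism $\phi\colon \pi_1(M)\to \Z_d$ sends $\mu_i \mapsto r_i$; I would extend $\phi$ over $\pi_1(W_0)$ only after passing to a $d$-fold cyclic cover, which requires $\phi$ to extend --- equivalently requires the class in $H_1(M;\Z_d)$ to die in $H_1(W_0;\Z_d)$. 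This is where the cabling enters: the obstruction is measured by $\Lambda$ acting on $(r_1,\dots,r_r)$ mod $d$, and I would instead take the branched-cover/bounding construction in which $\phi$ extends after modifying $W_0$ by the standard Casson--Gordon trick (push the handles along the cables, or equivalently pass to the infinite cyclic cover and then the finite quotient). The $\frac{d^2-1}{3d^2}\sum r_i r_j n_{ij}$ term is exactly the correction coming from the non-closedness of the relevant cycle, i.e.\ from the self-linking of the cabling data --- this is the term that makes the whole expression a well-defined invariant independent of choices.

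The heart of the computation is then the following. For a cyclic cover, $\lsign_\Gamma(W) - \sign(W)$ with $\Gamma = \Z_d$ is (by the $L^2$-induction / Atiyah--Singer localization) the average over the nontrivial characters $\chi\colon \Z_d \to S^1$ of the twisted signatures $\sign_\chi(W) - \sign(W)$, and by Casson--Gordon \cite{Casson-Gordon:1978-1} together with Gilmer \cite{Gilmer:1981-1} each such twisted signature defect is computed from the Levine--Tristram signature of the associated cable link $L'$ evaluated at the corresponding root of unity, plus the linking-matrix term $\sign\Lambda$ (which appears because we subtract $\sign(W_0)$ once per character, giving the factor $\frac{d-1}{d}$). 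Assembling: $\rhot(M,\phi) = \frac{1}{d}\sum_{k=1}^{d-1}\big(\sign_{\chi^k}(W) - \sign(W)\big)$, and Gilmer's formula identifies $\sum_{k=1}^{d-1}\big(\sign_{\chi^k}(W)-\sign(W)\big)$ with $d\,\bar\sigma(L',d) - (d-1)\sign\Lambda + \frac{d^2-1}{3d}\sum_{i,j} r_i r_j n_{ij}$, which upon division by $d$ yields precisely the claimed identity.

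The main obstacle I expect is bookkeeping the cabling and the sign/normalization conventions so that they match Casson--Gordon's and Gilmer's exactly: one must verify that the $n_i$-twisted $r_i$-cable of $K_i$ is the correct link whose Levine--Tristram signature realizes the twisted-coefficient intersection form on the $d$-fold cover of $W_0$, and that the orientation convention ``sum of parallels homologous to $r_i K_i$'' is the one that makes $\bar\sigma(L',d)$ depend only on $\phi$ and not on the individual $r_i\in\Z$ representing $\phi(\mu_i)$. The self-intersection/framing correction term $\frac{d^2-1}{3d^2}\sum r_i r_j n_{ij}$ is the place where a wrong choice of cable or framing would produce the wrong coefficient, so I would derive it carefully as the defect of a non-closed surface representing the relevant $\Z_d$-homology class, using the quadratic-refinement/linking-form identity $\sum_{k=1}^{d-1}\big(\tfrac{k}{d}\big)\big(1-\tfrac{k}{d}\big) = \tfrac{(d-1)(d+1)}{6d}$ up to normalization --- checking this constant is the one genuinely delicate arithmetic point. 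Everything else --- existence of $W_0$, the spectral decomposition of $H_2(W;\N\Gamma)$, finiteness of $\ldim_\Gamma V_\pm$, and independence of $\rhot$ from the bounding data --- is quoted from the material recalled earlier in this section and from \cite{Cha:2014-1}.
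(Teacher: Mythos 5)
Your proposal is correct and follows essentially the same route as the paper: identify $\rhot(M,\phi)$ over the finite group $\Z_d$ with the average $\frac{1}{d}\sum_{k}\sigma_k(M,\phi)$ of the Casson--Gordon/Atiyah--Singer invariants (using $\N\Z_d=\C[\Z_d]$, $\ldim_{\Z_d}=\frac1d\dim_\C$, and the eigenspace decomposition of $H_2(\tilde W;\C)$), then substitute Gilmer's formula and sum over the nontrivial characters via $\sum_{k=1}^{d-1}2k(d-k)/d^2=(d^2-1)/3d$. The only real difference is that the paper quotes Gilmer's Theorem~3.6 together with the bordism-theoretic definition of $\sigma_k(M,\phi)$ (a $4$-manifold $W$ with $\partial W=sM$ over $\Z_d$, $s\ne0$), which sidesteps the extension-of-$\phi$ and branched-cover issues you propose to handle by modifying the surgery trace $W_0$ directly.
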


\begin{proof}
  In~\cite{Casson-Gordon:1986-1}, Casson and Gordon defined an
  invariant $\sigma_r(M,\phi)$ as follows.  Since $\Omega_3(\Z_d)$ is
  finite, there is a $4$-manifold $W$ over $\Z_d$ such that $\partial W
  = sM$ over $\Z_d$ for some integer $s\ne 0$.  Let $\tilde W$ be the
  $\Z_d$-cover of~$W$.  The generator $1\in \Z_d$ induces an order $d$
  linear operator $g\colon H_2(\tilde W;\C) \to H_2(\tilde W;\C)$.
  Let $\sigma_k(\tilde W)$ be the signature of the intersection form
  of $\tilde W$ restricted on the $e^{2\pi k\sqrt{-1}/d}$-eigenspace
  of~$g$.  Then the rational number
  \begin{equation}
    \label{equation:definition-of-casson-gordon-invariant}
    \sigma_k(M,\phi):=\frac{1}{s}\big(\sigma_k(\tilde W)-\sign(W)\big)
  \end{equation}
  is well-defined, independent of the choice of~$W$.  (Our sign
  convention is opposite of that of~\cite{Casson-Gordon:1986-1} but
  agrees with that of~\cite{Gilmer:1981-1}.)  It is known that
  $\sigma_0(M,\phi)=0$; e.g., see \cite[p.~40]{Casson-Gordon:1986-1}.
  Due to Gilmer~\cite[Theorem~3.6]{Gilmer:1981-1}, we have
  \begin{equation}
    \label{equation:gilmer-formula}
    \sigma_k(M,\phi) = \sigma_{L'}(e^{2\pi k\sqrt{-1}/d}) - \sign \Lambda +
    \frac{2(d-k)k}{d^2} \sum_{i,j}r_i r_j n_{ij}
  \end{equation}
  for $0<k<d$.  See also \cite[Section~3]{Casson-Gordon:1978-1} for a
  special case.

  It is related to $\rhot(M,\phi)$ as follows.  Since $\Z_d$ is
  finite, the group von Neumann algebra $\N\Z_d$ is equal to the
  ordinary group ring $\C[\Z_d]$ and the $L^2$-dimension over $\Z_d$
  is given by $\ldim_{\Z_d} V = \frac{1}{d} \dim_{\C} V$.  Observe
  that $H_2(W;\N\Z_d) = H_2(W;\C[\Z_d]) \cong H_2(\tilde W;\C)$ is the
  orthogonal sum of the $e^{2\pi k\sqrt{-1}/d}$-eigenspaces
  ($k=0,\ldots,d-1$).  From the definitions of $\sigma_k(\tilde W)$
  and $\lsign_{\Z_d}(W)$ (see \eqref{equation:l2-sign-definition} in
  Section~\ref{subsection:cheeger-gromov-invariant-complexity}), it
  follows that
  \[
  \lsign_{\Z_d} W = \frac{1}{d} \sum_{k=0}^{d-1}\sigma_k(\tilde W).
  \]
  From this and
  \eqref{equation:definition-of-casson-gordon-invariant}, it follows
  that
  \begin{equation}
    \label{equation:rho-as-sum-of-casson-gordon}
    \rhot(M,\phi) = \frac1s \big(\lsign_{\Z_d}W - \sign W\big) = \frac{1}{d}
    \sum_{k=0}^{d-1} \sigma_k(M,\phi).
  \end{equation}
  Substituting \eqref{equation:gilmer-formula} into
  \eqref{equation:rho-as-sum-of-casson-gordon}, we obtain the desired
  formula for $\rhot(M,\phi)$.
\end{proof}

Using Theorem~\ref{theorem:rho-over-finite-cyclic-group}, we give an
explicit formula for the Cheeger-Gromov invariant of surgery manifolds
of knots.

\begin{corollary}
  \label{corollary:rho-for-knot-surgery}
  Suppose $K$ is a knot in $S^3$, $n>0$, and let
  \[
  \phi\colon \pi_1(M(K,n))\to H_1(M(K,n))=\Z_n
  \]
  be the abelianization.  Then
  \[
  \rhot(M(K,n),\phi) = \frac{n}{3} + \frac{2}{3n} - 1 +
  \bar\sigma(K,n).
  \]
\end{corollary}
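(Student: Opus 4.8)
The plan is to derive Corollary~\ref{corollary:rho-for-knot-surgery} as the special case of Theorem~\ref{theorem:rho-over-finite-cyclic-group} in which the surgery link $L$ is a single knot $K$ and $\phi$ is the abelianization. So I would apply Theorem~\ref{theorem:rho-over-finite-cyclic-group} with $r=1$, $K_1=K$, $n_1=n$, and target group $\Z_d$ for $d=n$, and then simply read off and simplify the resulting formula.

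First I would record the elementary data entering that formula. The linking matrix is the $1\times 1$ matrix $\Lambda=(n)$, so $\sign\Lambda=1$ because $n>0$. The meridian $\mu=\mu_1$ of $K$ generates $H_1(M(K,n))=\Z_n$ --- the surgery relation is $n\mu=-\lambda$ with $\lambda$ nullhomologous in the knot exterior --- and since $\phi$ is the abelianization we may take $r_1=1$; with this choice $\sum_{i,j}r_ir_jn_{ij}=r_1^2 n_{11}=n$. It then remains to identify the link $L'$: replacing $K$ by a nonempty $n$-twisted $1$-cable means taking a single parallel copy of $K$ along its $n$-framing, which is a knot isotopic to $K$ in $S^3$. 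Hence $\sigma_{L'}=\sigma_K$ as functions on $S^1$, and therefore $\bar\sigma(L',n)=\bar\sigma(K,n)$.

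Substituting these into the formula of Theorem~\ref{theorem:rho-over-finite-cyclic-group} gives
\[
\rhot(M(K,n),\phi)=\bar\sigma(K,n)-\frac{n-1}{n}+\frac{n^2-1}{3n^2}\cdot n,
\]
and the proof finishes with the routine algebra $-\frac{n-1}{n}+\frac{n^2-1}{3n}=-1+\frac1n+\frac n3-\frac1{3n}=\frac n3+\frac{2}{3n}-1$, yielding the stated expression. I do not expect a serious obstacle: the only points needing a word of care are the verification that the meridian maps to a generator of $\Z_n$ under the abelianization (so that $r_1=1$ is a legitimate choice, and the remaining auxiliary choices are irrelevant by the well-definedness already in Theorem~\ref{theorem:rho-over-finite-cyclic-group}), the observation that a single $n$-framed pushoff of $K$ is isotopic to $K$ so its Levine--Tristram signature function is unchanged (the framing twist affects the surgery but not the isotopy type of the knot in $S^3$), and keeping track of the hypothesis $n>0$, which is what makes $\sign\Lambda=+1$.
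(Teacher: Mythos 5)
Your proposal is correct and follows exactly the paper's own route: apply Theorem~\ref{theorem:rho-over-finite-cyclic-group} with $d=n$, $r_1=1$, $\Lambda=(n)$, and the observation that the $n$-twisted $1$-cable of $K$ is $K$ itself, then simplify. The algebraic simplification and the points of care you flag (meridian generating $\Z_n$, $\sign\Lambda=+1$ from $n>0$) all check out.
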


\begin{proof}
  Note that $\phi$ takes a meridian of $K$ to $1\in \Z_n$.  Also, the
  $n$-twisted $1$-cable of $K$ is $K$ itself, and the linking matrix
  for the $n$-surgery on $K$ is
  $\Lambda=\begin{bmatrix}n\end{bmatrix}$.  Therefore, by applying
  Theorem~\ref{theorem:rho-over-finite-cyclic-group}, we obtain the
  formula for $\rhot(M(K,n),\phi)$.
\end{proof}

\subsection{Lower bounds of the complexity of surgery manifolds of knots}
\label{subsection:lower-bound-surgery-manifold-complexity}

For a knot $K$ in $S^3$, we denote by $g_4(K)$ the (topological)
\emph{slice genus} of~$K$.  That is, $g_4(K)$ is the minimal genus of
a properly embedded locally flat orientable surface in $B^4$ bounded
by~$K$.

\begin{theorem}
  \label{theorem:complexity-lower-bound-for-knot-surgery}
  Suppose $K$ is a knot in~$S^3$, and $n\ne 0$.  Then
  \[
  c(M(K,n)) \ge \frac{|n|-3-6g_4(K)}{627419520 \mathstrut}.
  \]
\end{theorem}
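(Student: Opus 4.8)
The plan is to feed the Cheeger--Gromov $\rho$-invariant of $M(K,n)$ into Theorem~\ref{theorem:lower-bound-of-complexity}, using Corollary~\ref{corollary:rho-for-knot-surgery} to make the left-hand side explicit. Since the roles of $n$ and $-n$ are symmetric under orientation reversal (which changes neither the complexity of $M(K,n)$ nor the relevant signature data in the way that matters for an absolute-value estimate), I would first reduce to the case $n>0$. Then for the abelianization $\phi\colon \pi_1(M(K,n))\to\Z_n$, Corollary~\ref{corollary:rho-for-knot-surgery} gives
\[
\rhot(M(K,n),\phi) = \frac{n}{3} + \frac{2}{3n} - 1 + \bar\sigma(K,n),
\]
and Theorem~\ref{theorem:lower-bound-of-complexity} yields
\[
627419520\cdot c(M(K,n)) \ge 3\bigl|\rhot(M(K,n),\phi)\bigr| \ge n + \frac{2}{n} - 3 + 3\bar\sigma(K,n)
\]
(after clearing the factor $3$; note $209139840\cdot 3 = 627419520$). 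Dropping the positive term $2/n$, it remains to control $\bar\sigma(K,n)$ from below, and this is where the slice genus enters.

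The key input is the classical bound $|\sigma_K(\omega)| \le 2g_4(K)$ for the Levine--Tristram signature at any root of unity $\omega$ of prime power order coprime to the order of torsion, but more robustly one uses that $|\sigma_K(\omega)|\le 2g_4(K)$ holds for all $\omega\in S^1$ at which $\sigma_K$ is defined, as a consequence of the fact that $\sigma_K(\omega)$ is a signature of a $(2g_4(K))\times(2g_4(K))$ Hermitian form coming from a genus-$g_4(K)$ locally flat slice surface (Levine--Tristram signatures are slice-genus bounds in the topological category). Averaging, $|\bar\sigma(K,n)| \le \frac{1}{n}\sum_{k=1}^{n-1} 2g_4(K) < 2g_4(K)$, hence $3\bar\sigma(K,n) \ge -6g_4(K)$. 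Substituting,
\[
627419520\cdot c(M(K,n)) \ge n - 3 - 6g_4(K) = |n| - 3 - 6g_4(K),
\]
which is the claimed inequality. When the right-hand side is negative the statement is vacuous since $c(M(K,n))\ge 0$, so no separate argument is needed there.

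The main obstacle is pinning down the precise hypotheses under which $|\sigma_K(\omega)| \le 2g_4(K)$ holds: the signature function can jump at roots of the Alexander polynomial, and one must be careful that the average $\bar\sigma(K,n)$ is taken over exactly the $n$th roots of unity appearing in the definition, including any at which $\sigma_K$ is only defined by a one-sided limit convention. I would cite the standard references for the topological slice-genus bound on Levine--Tristram signatures (valid on a dense subset of $S^1$, and extended to all of $S^1$ by averaging left and right limits, which does not affect the averaged quantity), so that the bound $|\bar\sigma(K,n)|\le 2g_4(K)$ is legitimate for every $n$. Once that is in hand the rest is the elementary manipulation above.
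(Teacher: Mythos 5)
Your proposal is correct and follows essentially the same route as the paper: reduce to $n>0$ by mirroring, apply Corollary~\ref{corollary:rho-for-knot-surgery} to compute $\rhot(M(K,n),\phi)$ for the abelianization, bound $|\bar\sigma(K,n)|$ by $2g_4(K)$ via the Levine--Tristram slice-genus inequality (the paper cites Taylor for this), and feed the result into Theorem~\ref{theorem:lower-bound-of-complexity}. Your extra care about where $|\sigma_K(\omega)|\le 2g_4(K)$ holds is a reasonable point the paper handles simply by citation.
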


The slice genus $g_4(K)$ in
Theorem~\ref{theorem:complexity-lower-bound-for-knot-surgery} can be
replaced by either one of the smooth slice genus, the unknotting
number, the Seifert genus, or the crossing number of $K$, since
$g_4(K)$ is not greater than any one of them.  In particular, the
lower bound part of
Theorem~\ref{theorem:complexity-lower-bound-for-integral-dehn-surgery}
in the introduction is an immediate consequence of
Theorem~\ref{theorem:complexity-lower-bound-for-knot-surgery}.

\begin{proof}[Proof of Theorem~\ref{theorem:complexity-lower-bound-for-knot-surgery}]
  We may assume that $n>0$, by taking the mirror image of $K$ if
  $n<0$.  Let $\phi\colon \pi_1(M(K,n))\to \Z_n$ be the
  abelianization.  From
  Corollary~\ref{corollary:rho-for-knot-surgery}, we obtain
  \begin{equation}
    \label{equation:rhot-surgery-first-estimate}
    |\rhot(M(K,n),\phi)| \ge \frac{n-3-3|\bar\sigma(K,n)|}{3}.
  \end{equation}

  It is known that $|\sigma_K(\omega)| \le 2g_4(K)$ for any root of
  unity $\omega\in S^1$ (for example, see
  \cite[p.~145]{Taylor:1979-1}).  From this it follows that
  \begin{equation}
    \label{equation:signature-average-4-ball-genus}
    |\bar\sigma(K,n)| \le \frac{1}{n} \sum_{k=0}^{n-1} \big|\sigma_K(e^{2\pi k
      \sqrt{-1}})\big| \le 2g_4(K).
  \end{equation}

  From \eqref{equation:rhot-surgery-first-estimate} and
  \eqref{equation:signature-average-4-ball-genus}, it follows that
  \begin{equation}
    \label{equation:degree-lower-bound-for-rhot-surgery}
    |\rhot(M(K,n),\phi)| \ge \frac{ n-3-6g_4(K) }{3}.
  \end{equation}

  By combining Theorem~\ref{theorem:lower-bound-of-complexity} and
  \eqref{equation:degree-lower-bound-for-rhot-surgery}, we obtain the
  inequality.
\end{proof}

\begin{proof}[Proof of Theorem~\ref{theorem:complexity-lower-bound-from-signature}]
  Similarly to the above proof of
  Theorem~\ref{theorem:complexity-lower-bound-for-knot-surgery}, we
  may assume $n>0$.  Then it is easily verified that $\frac{n}{3}
  +\frac{2}{3n} -1$ is nonnegative.  From this and from
  Corollary~\ref{corollary:rho-for-knot-surgery}, it follows that
  \begin{equation}
    \label{equation:signature-lower-bound-for-rhot-surgery}
    \begin{aligned}
      |\rhot(M(K,n),\phi)| & \ge |\bar\sigma(K,n)| - \Big(\frac{n}{3}
      +\frac{2}{3n} -1 \Big)
      \\
      & \ge \frac{3|\bar\sigma(K,n)| - n +1}{3}.
    \end{aligned}
  \end{equation}

  By combining Theorem~\ref{theorem:lower-bound-of-complexity} and
  \eqref{equation:signature-lower-bound-for-rhot-surgery}, we obtain
  \[
  c(M(K,n)) \ge \frac{3|\bar\sigma(K,n)|-n+1 \mathstrut}{627419520}.
  \qedhere
  \]
\end{proof}

\section{Computation for examples}
\label{section:computation-for-examples}

This section is devoted to the proof of
Theorem~\ref{theorem:gromov-norm-complexity-gap}\@.  In the first
subsection we show that the surgery manifolds considered in
Theorem~\ref{theorem:gromov-norm-complexity-gap} are hyperbolic using
Gromov-Thurston's $2\pi$-theorem, and estimate the Gromov norm.  In
the second subsection we estimate the complexity of $M(J_n,d)$ via
computation of signature invariants, using
Theorem~\ref{theorem:complexity-lower-bound-from-signature}\@.

\subsection{Hyperbolicity and Gromov norm}
\label{subsection:hyperbolicity-gromov-norm}

Consider the link $L$ shown in
Figure~\ref{figure:two-bridge-seed-link}.

\begin{figure}[H]
  \includegraphics[scale=.9]{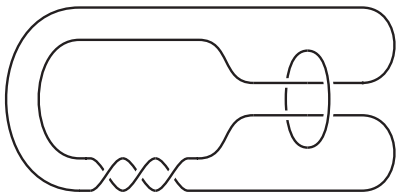}
  \caption{A two-component hyperbolic link $L$ with volume $5.3335$}
  \label{figure:two-bridge-seed-link}
\end{figure}

According to computation using SnapPy~\cite{SnapPy}, $L$ is a
hyperbolic link with $\Vol(S^3\setminus L)=5.3335$.  Also, for both
cusps, the translation lengths of the meridian and the longitude (on
the boundary of a maximal horoball neighborhood) are
$-0.4204+1.1124\sqrt{-1}$ and $3.3636$, respectively.

It follows that the length of the slope $k\in \Z$ is greater than
$6.7271$ if $k> 5$, and the length of the slope $1/n$ with $n\in \Z$
is greater than $6.4040$ if $n > 1$.  By the $2\pi$-theorem of Gromov
and Thurston and by the geometrization conjecture, it follows that the
$(k,1/n)$-surgery on the link $L$ is hyperbolic if $k > 5$ and $n >
1$, and by the hyperbolic Dehn surgery theorem, its Gromov norm is
bounded by $\Vol(S^3\setminus L)/v_3=5.2552$.

Observe that the $(1/n)$-surgery on the component drawn as a circle in
Figure~\ref{figure:two-bridge-seed-link} introduces $n$ left-handed
full twists between the two enclosed strands, so that the other
component becomes the 2-bridge knot $J_n$ shown in
Figure~\ref{figure:two-bridge-example}.  Also, the $(1/n)$-surgery
alters the framing of the other component by $-4n$ (for this we use
that the two enclosed strands have the same orientation).  It follows
that the surgery manifold $M(J_n,d)$ is equal to the
$(d+4n,1/n)$-surgery on the link $L$.

Summarizing, we have shown the following: $M(J_n,d)$ is hyperbolic
with $\|M(J_n,d)\|<5.2552$ for any $d>1$ and $n>1$.

\subsection{Complexity}

To obtain a lower bound for the complexity of $M(J_n,d)$ using
Theorem~\ref{theorem:complexity-lower-bound-from-signature}, we need
to compute the Levine-Tristram signature function of the 2-bridge
knot~$J_n$.  Using the Seifert surface and the generators $x_i$ shown
in Figure~\ref{figure:two-bridge-seifert-surface}, we obtain a
$2n\times 2n$ Seifert matrix $A$ for~$J_n$:
\[
A = 
\begin{bmatrix}
  1 & 1 \\
    & 1 & 1 \\
    &   & \ddots & \ddots \\
    &   &        &   1    &  1 \\
    &   &        &        &  -1 \\
\end{bmatrix}.
\]

\begin{figure}[H]
  \labellist\small
  \pinlabel {$x_1$} at 223 66
  \pinlabel {$x_2$} at 204 66
  \pinlabel {$x_{2n-1}$} at 164 66
  \pinlabel {$x_{2n}$} at 147 78
  \endlabellist
  \includegraphics{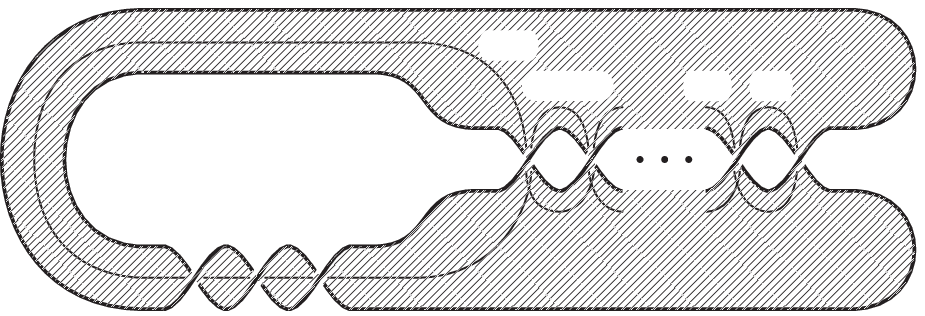}
  \caption{A Seifert surface for the 2-bridge knot $J_n$.}
  \label{figure:two-bridge-seifert-surface}
\end{figure}

Observe that if we replace the bottom-right entry $-1$ by $1$, $A$
becomes the Seifert matrix for the $(2, 2n+1)$ torus knot $T_{2,2n+1}$
shown in Figure~\ref{figure:torus-knot-seifert-surface}.

\begin{figure}[H]
  \labellist\small
  \pinlabel {$x_1$} at 138 65
  \pinlabel {$x_2$} at 119 65
  \pinlabel {$x_{2n-1}$} at 68 65
  \pinlabel {$x_{2n}$} at 43 65
  \endlabellist
  \includegraphics{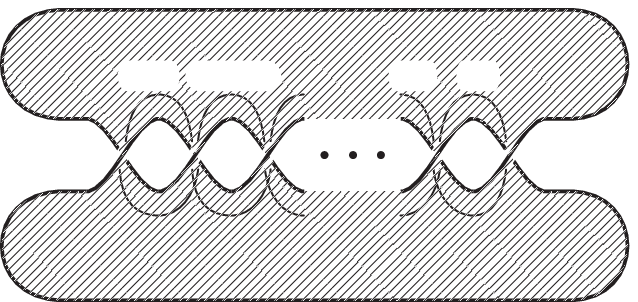}
  \caption{The torus knot $T_{2,2n+1}$ with a Seifert surface.}
  \label{figure:torus-knot-seifert-surface}
\end{figure}

It follows that the Levine-Tristram signatures for $J_n$ and
$T_{2,2n+1}$ differ by at most two, and consequently so do their
averages: for any $d$,
\begin{equation}
  \big|\bar\sigma(J_n,d)-\bar\sigma(T_{2,2n+1},d)\big| \le 2.
  \label{equation:torus-2-bridge-signature-average}
\end{equation}

By work of Litherland~\cite{Litherland:1979-1}, the Levine-Tristram
signature function of a torus knot is well-understood.  Especially,
for the case of $T_{2,2n+1}$, Proposition~1 of
\cite{Litherland:1979-1} gives us
\[
\sigma_{T_{2,2n+1}}(e^{2\pi x \sqrt{-1}}) = 2n-2\Big\lfloor
(2n+1)\Big(\frac12 -x\Big)\Big\rfloor
\]
for rational $x\in (0,\frac12]$.  It follows that
\begin{equation*}
  \begin{aligned}
    \bar\sigma({T_{2,2n+1}},d) &\ge \frac{2}{d}\sum_{k=1}^{(d-1)/2} \bigg(2n - 2\cdot
    (2n+1)\Big(\frac12 - \frac{k}{d} \Big) \bigg) 
    \\
    & = \Big(1-\frac{1}{d^2}\Big)n - \frac{(d-1)^2}{2d^2}
  \end{aligned}
\end{equation*}
for odd~$d$, and
\begin{equation*}
  % \label{equation:1}
  \begin{aligned}
    \bar\sigma({T_{2,2n+1}},d) &\ge \frac{1}{d}\bigg(2\sum_{k=1}^{(d/2)-1} \bigg(2n - 2\cdot
    (2n+1)\Big(\frac12 - \frac{k}{d} \Big) \bigg) + 2n\bigg)
    \\
    & = n - \frac{d-2}{2d}
  \end{aligned}
\end{equation*}
for even~$d$.  In both cases, we have
\begin{equation}
  \label{equation:torus-signature-average-estimate}
  \bar\sigma({T_{2,2n+1}},d) \ge \Big(1-\frac{1}{d^2}\Big)n - \frac{d-1}{2d}.
\end{equation}
% \begin{equation}
%   \begin{aligned}
%     \bar\sigma({T_{2,2n+1}},d) &\ge \frac1d \sum_{k=1}^{d-1} \bigg(2n - 2\cdot
%     (2n+1)\Big(\frac12 - \frac{k}{d} \Big) \bigg) 
%     \\
%     & = \frac{d-1}{d} \cdot 2n.
%   \end{aligned}
%   \label{equation:torus-signature-average-estimate}
% \end{equation}
From \eqref{equation:torus-2-bridge-signature-average}
and~\eqref{equation:torus-signature-average-estimate}, we obtain
\begin{equation}
  \bar\sigma(J_n,d) \ge \Big(1-\frac{1}{d^2}\Big)n - \frac{5d-1}{2d}
  \label{equation:signature-averate-estimate-for-J_n}
\end{equation}
where the right hand side is positive for $n\ge 3$, $d\ge 2$.  Now,
combining \eqref{equation:signature-averate-estimate-for-J_n} and
Theorem~\ref{theorem:complexity-lower-bound-from-signature}, we obtain
\[
\begin{aligned}
  c(M_n) &\ge \frac{1}{627419520}\Big(3\Big(1-\frac{1}{d^2}\Big)n -
  \frac{15d-3}{2d}-d+1\Big)
  \\
  &\ge  \frac{1}{627419520}\Big(3\Big(1-\frac{1}{d^2}\Big)n - (d+7)\Big)
\end{aligned}
\]
Together with the last sentence of
Section~\ref{subsection:hyperbolicity-gromov-norm}, it completes the
proof of Theorem~\ref{theorem:gromov-norm-complexity-gap}.

\bibliographystyle{amsalpha}
\renewcommand{\MR}[1]{}
\bibliography{research}

\end{document}